\newtheorem{Theorem}{Theorem}[section]
\newtheorem{Corollary}[Theorem]{Corollary}
\title{Metrical irrationality results related to values of the Riemann $\zeta$-function 
\footnotetext{AMS Class:11J72 .}
\footnotetext{Key words: $\zeta$ function; irrationality; linear independence. }}
\author{Jaroslav Han\v{c}l\footnote{This work was supported by grant 
no.17-02804S of the Czech Grant Agency. } and Simon Kristensen
}
\begin{document}

\maketitle
\date{}

\begin{abstract} 
 We introduce a one-parameter family of series associated to the Riemann $\zeta$-function and prove that the values of the elements of this family at integers are linearly independent over the rationals for almost all values of the parameter, where almost all is with respect to any sufficiently nice measure. We also give similar results for the Euler--Mascheroni constant, for $\sum_{n=1}^\infty \frac{1}{n^n}$ and for   $\sum_{n=1}^\infty \frac{1}{n! +1}$. Finally, specialising the criteria used, we give some new criteria for the irrationality of $\zeta(k)$, the Euler--Mascheroni constant and the latter two series.
\end{abstract}

\section{Introduction}

The irrationality of the value of the Riemann $\zeta$-function at odd integers is a long standing open problem. For even integer arguments, it was famously shown by Euler \cite{euler} that 
\begin{equation*}
\zeta(2n) = (-1)^{n-1} \frac{2^{2n-1} \pi^{2n} B_{2n}}{(2n)!},
\end{equation*}
where $B_{2n}$ is the $2n$'th Bernoulli number, which is rational. Lindemann \cite{lindemann} proved in 1882 that $\pi$ is transcendental, so it immediately follows that $\zeta(2n)$ is irrational for each $n \in \mathbb{N}$. By contrast, the value of $\zeta$ at odd integers largely remains a mystery, and not many results were known until 1979 when Ap\'ery \cite{apery} published a proof that $\zeta(3)$ is irrational. Rivoal \cite{rivoal} subsequently proved that infinitely many odd $\zeta$-values are irrational; and Zudilin \cite{zudilin} proved that at least one of $\zeta(5), \zeta(7), \zeta(9)$ and $\zeta(11)$ is irrational.

We are not able to resolve the question of the irrationality of odd $\zeta$-values but we will present a related result. To motivate our results, let $k \ge 2$ be an integer. We first observe that 
\begin{equation*}
\zeta(k) = \sum_{n=1}^\infty \frac{1}{n^k} = \sum_{n=1}^\infty \frac{((n-1)!)^k}{(n!)^k} = \sum_{n=1}^\infty \frac{[((n-1)!)^k]}{(n!)^k},
\end{equation*}
where $[x]$ denotes the integer part of $x$, so that the last equality is trivial. We will modify this last expression by putting a real parameter inside the square brackets in the numerator, that is, we consider the series
\begin{equation}
\label{eq:modified_zeta}
\sum_{n=1}^\infty \frac{[((n-1)!)^k x]}{(n!)^k}.
\end{equation}
Below, we will prove that for any integer $k \ge 2$, this series is irrational for $\mu$-almost all $x$, whenever $\mu$ is a Radon measure with positive Fourier dimension (see the definition below). In particular, this holds for almost all $x$ with respect to Lebesgue measure. Appealing to results of Kauffman \cite{kaufman1} and \cite{kaufman2}, we immediately see that the result also holds for almost all badly approximable numbers in an appropriate sense, as well as for almost all numbers with irrationality measure greater than some prescribed $v > 2$. More details will follow below.

In addition to the perturbed $\zeta$-values of \eqref{eq:modified_zeta}, we are able to modify other famous series in a corresponding way. Vacca's formula for the Euler--Mascheroni constant $\gamma$ in \cite{vacca} states that
\begin{equation*}
\gamma = \sum_{n=1}^\infty (-1)^n \frac{[\log_2 n]}{n}.
\end{equation*}
We turn this into a factorial series as before to obtain a family of series depending on a real parameter $x$,
\begin{equation}
\label{modified_euler}
\sum_{n=1}^\infty (-1)^n \frac{[(n-1)! [\log_2 n] x]}{n!}.
\end{equation}
Again, these series are irrational almost surely with respect to any measure satisfying the above properties.

It would be natural to suspect that the full set of perturbed $\zeta$-values at integers together with the perturbed Euler--Mascheroni constant will be linearly independent over $\mathbb{Q}$ almost surely with respect to any such measure. We are not able to prove this for the particular perturbation of the series given above, although we are able to establish linear independence for the set of series
\begin{multline}
\label{eq:lin_ind}
\{\sum_{n=1}^\infty \frac{[((n-1)!)^Kn^{K-j} x]}{(n!)^K}; j\in\{ 2,\dots ,K\}\}\\ 
\cup \{\sum_{n=1}^\infty (-1)^n \frac{[((n-1)!)^Kn^{K-1} x[\log_2 n]]}{(n!)^K}, 1\},
\end{multline}
where $K \in \mathbb{N}$. These series specialise to the $\zeta$-values and the Euler--Maschero\-ni constant respectively, if we let $x=1$. Note however, that in this case the particular form of the series considered depend on their quantity.

Finally, to illustrate the versatility of the method, we make similar modifications to two other famous series, whose irrationality is at present unknown, namely the series
\begin{equation*}
\sum_{n=1}^{\infty} \frac{1}{n^n} \quad \text{ and } \quad \sum_{n=1}^\infty \frac{1}{n! + 1}.
\end{equation*}
The first series is sometimes known as Sophmore's Dream, due to the seemingly `too-good-to-be-true' identity
$$
\sum_{n=1}^{\infty} \frac{1}{n^n} = \int_0^1 x^{-x} dx,
$$
discovered by J. Bernoulli in 1697. The first terms can be found in \cite{s}. The second one is due to Erd\H{o}s \cite{Erdos2}. In fact, he asked if for any integer $t$ the sum of the series  $\sum_{n=1, n!\not= -t}^\infty \frac{1}{n! + t}$ is irrational. 
With these two series, the perturbed variants become
\begin{equation}
\label{eq:modified_series}
\sum_{n=1}^\infty \frac{\left[\prod_{j=1}^{n-1} j^j x\right]}{\prod_{j=1}^{n} j^j} \quad \text{ and } \quad \sum_{n=1}^\infty \frac{\left[\prod_{j=1}^{n-1} (j! +1) x\right]}{\prod_{j=1}^{n} (j! +1)}.
\end{equation} 

It is worth noting that our results are in a first instance metrical; the irrationality or linear independence is established for almost all real parameters in a set. However, our method also gives rise to an approach to proving the irrationality of the series in question for a particular, prescribed value of the parameter. Indeed, as the main idea of the proof is to establish the uniform distribution modulo $1$ of a certain sequence, we need only establish this in the particular case, as opposed to the `almost all' case, and in fact we can prove irrationality with a significantly weaker property than uniform distribution modulo $1$. In the final section of the paper, we give some seemingly new irrationality criteria for the original sequences.

Our method is in the spirit of Schlage-Puchta \cite{p} when he proved the irrationality of $\sum_{n=1}^\infty \frac{[n^\alpha]+1}{n!}$ for all reals $\alpha$. This result was also proved by Han\v cl and Tijdeman \cite{ht2} by a different method. For more related results, see Han\v cl and Tijdeman \cite{ht1} and \cite{ht3}-\cite{ht5}.

Throughout the  paper, we let $\mathbb N$, $\mathbb Q$ and $\mathbb R$ denote the sets of all positive integers, rational numbers and real numbers, respectively. For a real number $x$, we denote by  $[x]$, $\{ x\}$ and $\left\Vert x \right\Vert$ the integer part of $x$, the fractional part of $x$ and the distance from $x$ to the nearest integer, respectively.

\section{A result on uniform distribution modulo $1$}

 In this section, we provide the first ingredient to our irrationality results. The ideas of the proof are found in Haynes, Jensen and Kristensen \cite{hjk}, where the method is applied to a different problem and stated in a different form. We state the result in the form needed here.
  
We first need some notation. Let $\mu$ be a Radon measure on $\mathbb{R}$. The Fourier transform of $\mu$ is defined as
\begin{equation*}
\hat{\mu}(t) = \int_{-\infty}^\infty e^{-2\pi i xt}d\mu(x).
\end{equation*}
The behaviour of the Fourier transform of a measure at infinity is strongly related to the geometry of its support. Indeed, if we define the Fourier dimension of a measure $\mu$ to be
\begin{equation*}
\dim_F(\mu) = \sup\{\nu \ge 0 : \vert \hat{\mu}(t) \vert \ll (1+\vert t \vert)^{-\eta/2}\},
\end{equation*}
the Fourier dimension of $\mu$ always gives a lower bound on the Hausdorff dimension of the support of $\mu$.

For our purposes, it is only relevant that the Fourier dimension of the measures considered is positive. Examples of this of course include the Lebesgue measure on an interval, but other arithmetically interesting examples exist. Kaufman \cite{kaufman1} proved that the set
\begin{equation*}
F_M = \{x \in [0,1) \setminus \mathbb{Q} : a_n(x) \le M \text{ for all } n \in \mathbb{N}\},
\end{equation*}
supports such a measure whenever $M \ge 3$. Here, $a_n(x)$ is the $n$'th partial quotient in the simple continued fraction expansion of $x$. The result was extended to $M \ge 2$ by Queff\'elec and Ramar\'e \cite{qr}.

A further example, also due to Kaufman, is the set of numbers with a lower bound on their irrationality measure. For a real number $x$, let 
\begin{equation*}
w(x) = \sup\{w > 0 : \vert x - p/q \vert < q^{-w} \text{ for infinitely many } p/q \in \mathbb{Q}\}.
\end{equation*}
From Dirichlet's theorem in Diophantine approximation, $w(x) \ge 2$ for all irrational numbers $x$. Let $v \ge 2$. Kaufmann \cite{kaufman2} constructed a measure $\mu_v$ on the set
\begin{equation*}
W(v) = \{x \in \mathbb{R} : w(x) \ge v\},
\end{equation*}
such that $\dim_F \mu_v = \frac{2}{v}$. This coincides with the Hausdorff dimension of the set found by Jarn\'ik \cite{jarnik} and Besicovitch \cite{besicovitch}, and so is best possible.

The following result of Haynes, Jensen and Kristensen \cite{hjk} is stated in terms of the Kaufmann measure on $F_M$, but the proof only requires the Fourier dimension of the measure to be positive. We state Corollary 7 of that paper for general measures.

\begin{Theorem} \label{thm:UD}
Let $\mu$ be a Radon measure on $\mathbb{R}$ with $\dim_F \mu > 0$ and let $(a_n)$ be a sequence of real numbers such that for some $c > 0$,  $\vert a_{k} - a_j \vert \ge c$ for all $k,j \in \mathbb{N}$ with $k \neq j$. Then, $(a_n x)$ is uniformly distributed modulo $1$ for $\mu$-almost all $x \in \mathbb{R}$.
\end{Theorem}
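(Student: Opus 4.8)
The plan is to combine Weyl's criterion for uniform distribution modulo $1$ with a second-moment (variance) estimate taken against the measure $\mu$, exploiting the Fourier decay encoded in the hypothesis $\dim_F \mu > 0$. Writing $e(y) = e^{2\pi i y}$, Weyl's criterion reduces the task to showing that for each fixed integer $h \neq 0$ the exponential averages
\[
\frac{1}{N} S_N(h,x) := \frac{1}{N} \sum_{n=1}^N e(h a_n x)
\]
tend to $0$ for $\mu$-almost all $x$. Since there are only countably many choices of $h$, intersecting the corresponding full-measure sets will produce a single $\mu$-full set on which $(a_n x)$ is uniformly distributed modulo $1$. I would therefore fix $h \neq 0$ throughout, and (after normalising, which is harmless for the finite measures of interest) assume $\mu$ is a probability measure.

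The heart of the argument is the computation of the mean square of $\frac{1}{N}S_N(h,\cdot)$. Expanding the modulus squared and integrating term by term, the definition of $\hat\mu$ gives
\[
\int \left\vert \frac{1}{N} S_N(h,x) \right\vert^2 d\mu(x) = \frac{1}{N^2} \sum_{n,m=1}^N \hat\mu\bigl(h(a_m - a_n)\bigr).
\]
The $N$ diagonal terms $n=m$ contribute $\hat\mu(0)/N = \mu(\mathbb{R})/N$, which is negligible. For the off-diagonal terms I would invoke the hypothesis $\dim_F\mu>0$: fixing any $\delta$ with $0<\delta<\tfrac12\dim_F\mu$ yields the decay estimate $\vert\hat\mu(t)\vert \ll (1+\vert t\vert)^{-\delta}$. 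It then remains to control $\sum_{n \neq m}(1 + \vert h\vert\,\vert a_m - a_n\vert)^{-\delta}$, and this is precisely where the separation hypothesis enters.

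The condition $\vert a_k - a_j\vert \ge c$ says that the points $(a_n)$ are $c$-separated, so any interval of length $2R$ contains at most $2R/c + 1$ of them; consequently, for each fixed $n$, the $k$-th nearest of the remaining points lies at distance $\gg kc$ from $a_n$. This collapses the inner sum to a one-dimensional tail $\sum_{k \ge 1}(1 + \vert h\vert c k)^{-\delta}$, which I expect to be the main technical obstacle, since its size is genuinely sensitive to the range of $\delta$: when $\delta > 1$ it converges and is $O(1)$; when $\delta = 1$ it is $O(\log N)$; and when $0 < \delta < 1$ it grows like $O(N^{1-\delta})$. In every case, summing over $n \le N$ and dividing by $N^2$ yields a bound of the shape $\int \vert\frac{1}{N}S_N\vert^2\, d\mu \ll N^{-\min(\delta,1)}\log N$, which tends to $0$. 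The essential point is that \emph{any} positive Fourier dimension suffices to beat the $N^2$ normalisation.

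Finally, to upgrade mean-square decay to almost-everywhere convergence I would apply the Davenport--Erd\H{o}s--LeVeque criterion. Because the bound above makes $\sum_{N \ge 1} \frac{1}{N} \int \vert\frac{1}{N}S_N(h,x)\vert^2\, d\mu(x)$ a convergent series, the criterion guarantees $\frac{1}{N}S_N(h,x) \to 0$ for $\mu$-almost every $x$. (Alternatively, one could extract a polynomially spaced subsequence along which the Borel--Cantelli lemma applies directly and then fill the gaps with a routine maximal estimate, but the Davenport--Erd\H{o}s--LeVeque route is cleaner.) Taking the intersection over all $h \neq 0$, as described above, completes the proof.
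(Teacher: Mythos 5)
Your argument is correct, and it is essentially the argument underlying the result. Note first that the paper does not actually prove Theorem~\ref{thm:UD}: it is quoted as Corollary~7 of Haynes, Jensen and Kristensen \cite{hjk}, together with the observation that the proof there only uses that the sequence $(a_n)$ takes values in a discrete set --- which is exactly the point where your separation hypothesis enters, via the count of at most $2R/c+1$ points in an interval of length $2R$. The engine of your proof (expand $\vert S_N(h,\cdot)\vert^2$, integrate against $\mu$ to produce $\hat\mu\bigl(h(a_m-a_n)\bigr)$, use $\vert\hat\mu(t)\vert\ll(1+\vert t\vert)^{-\delta}$ for $\delta<\tfrac12\dim_F\mu$, and reduce the off-diagonal sum to $\sum_k(1+\vert h\vert ck)^{-\delta}$ by separation) is the same second-moment Fourier computation as in \cite{hjk}. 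Where you diverge is in the packaging: \cite{hjk} feeds the moment estimate into the Erd\H{o}s--Tur\'an inequality to obtain a quantitative discrepancy bound, from which almost-everywhere uniform distribution follows by a Borel--Cantelli argument, whereas you go through Weyl's criterion for each fixed $h$ and invoke the Davenport--Erd\H{o}s--LeVeque criterion. Your route is shorter and suffices for the qualitative statement, which is all the present paper uses; the cost is that you lose the explicit discrepancy estimate that the paper remarks upon (and notes it does not need). Two small points of hygiene: you should say explicitly that a Radon measure with a well-defined, decaying Fourier transform is necessarily finite, so the normalisation is legitimate; and your case analysis in $\delta$ is slightly more elaborate than necessary, since once $\delta<1$ is handled the stronger decay cases are immediate.
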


We give a few words on the relation between the above statement and that of \cite{hjk}. In \cite{hjk}, the sequence $(a_n)$ is assumed to be composed of integers. This will not be the case for our sequences below, but in order for the proof of \cite{hjk} to work, we only need for the sequence to take its values in a discrete subset of the real numbers, i.e. a set with only isolated points. This is guaranteed by the assumption on universally lower bounded gaps. 

Also, in \cite{hjk}, a bound on the discrepancy of the sequence $(a_n x)$ is obtained. This gives a quantitative variant of uniform distribution modulo $1$, which we will not be needing here. Note however that the faster the sequence $(a_n)$ increases, the better the discrepancy bound.

It is curious to remark how Theorem \ref{thm:UD} yields a short proof of a result usually attributed to Kahane and Salem \cite{ks}, stating that the ternary Cantor set does not support a Radon measure with positive Fourier dimension. Indeed, suppose such a measure $\mu$ existed. By Theorem \ref{thm:UD} applied with $a_n = 3^n$ would imply that for almost all numbers $x$ in the ternary Cantor set, $(3^n x)$ would be uniformly distributed modulo $1$, which is the same as saying that almost all numbers in the ternary Cantor set are normal to base $3$. Clearly this is not the case, which completes the proof of the result of Kahane and Salem.

\section{Metrical Results}

We proceed with the announced application of uniform distribution to irrationality. The idea of using uniform distribution in proofs of irrationality appears in \cite{p}. Our approach is inspired by this paper. We begin with the announced result on linear independence of the set given in \eqref{eq:lin_ind}.

\begin{Theorem} \label{HanKris1.t1}
Let $K$ be a positive integer and let $\mu$ be a Radon measure on $\mathbb{R}$ with positive Fourier dimension.
For $\mu$-almost all numbers $x$ the set $\{\sum_{n=1}^\infty \frac{[((n-1)!)^Kn^{K-j} x]}{(n!)^K}; j\in\{ 2,\dots ,K\}\}\cup \{\sum_{n=1}^\infty (-1)^n \frac{[((n-1)!)^Kn^{K-1} x[\log_2 n]]}{(n!)^K}, 1\}$ consists of linearly independent numbers over rational numbers.
\end{Theorem}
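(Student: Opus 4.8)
The plan is to show that, for each fixed nonzero vector of coefficients, the set of $x$ realising a vanishing linear combination is $\mu$-null; since there are only countably many rational (hence, after clearing denominators, integer) coefficient vectors, a countable union of null sets then yields the claim. So I fix integers $c_0,c_2,\dots,c_K,d$, not all zero, and suppose
$$
c_0+\sum_{j=2}^K c_j\sum_{n=1}^\infty\frac{[((n-1)!)^Kn^{K-j}x]}{(n!)^K}+d\sum_{n=1}^\infty(-1)^n\frac{[((n-1)!)^Kn^{K-1}x[\log_2 n]]}{(n!)^K}=0 .
$$
Writing $v_n(x)=\sum_{j=2}^K c_j[((n-1)!)^Kn^{K-j}x]+d(-1)^n[((n-1)!)^Kn^{K-1}x[\log_2 n]]\in\Z$, the relation reads $c_0+\sum_{n\ge1}v_n(x)/(n!)^K=0$. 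Multiplying by $(m!)^K$ and using that $(n!)^K\mid(m!)^K$ for $n\le m$, I find that the tail $R_m(x)=\sum_{n>m}v_n(x)(m!)^K/(n!)^K$ is an integer for every $m$.

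The decisive step is to evaluate $R_m(x)$ asymptotically. Splitting each integer part as $[y]=y-\{y\}$ and using the identities $((n-1)!)^Kn^{K-j}/(n!)^K=n^{-j}$ and $((n-1)!)^Kn^{K-1}/(n!)^K=n^{-1}$, the parts linear in $x$ collapse to
$$
(m!)^K x\,A_m,\qquad A_m=\sum_{n>m}\Big(\sum_{j=2}^K c_j n^{-j}+d(-1)^n[\log_2 n]/n\Big),
$$
the tail of a convergent series (the combination of the $\zeta$-values and Vacca's series for $\gamma$). The remaining contribution $\Psi_m(x)=\sum_{n>m}\big(\sum_j c_j\{((n-1)!)^Kn^{K-j}x\}+d(-1)^n\{((n-1)!)^Kn^{K-1}x[\log_2 n]\}\big)(m!)^K/(n!)^K$ is built from fractional parts, which are bounded by $1$ regardless of $x$; since $\sum_{n>m}(m!)^K/(n!)^K=O((m+1)^{-K})$, one gets $\Psi_m(x)\to0$ uniformly in $x$. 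Setting $a_m=(m!)^K A_m$, this gives $R_m(x)=a_m x-\Psi_m(x)$, and because $R_m(x)\in\Z$,
$$
\left\Vert a_m x\right\Vert=\left\Vert \Psi_m(x)\right\Vert\le|\Psi_m(x)|\longrightarrow0 .
$$
Thus the solution set is contained in $\{x:\ \left\Vert a_m x\right\Vert\to0\}$.

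To reach a contradiction I apply Theorem~\ref{thm:UD} to $(a_m)$, which requires checking the gap hypothesis. Since $|A_m|$ decays only polynomially (up to a logarithm) while $(m!)^K$ grows factorially, $|a_{m+1}/a_m|=(m+1)^K|A_{m+1}/A_m|\to\infty$, so the magnitudes $|a_m|$ grow super-exponentially and $|a_k-a_j|$ is bounded below for large, distinct indices; restricting to the indices with $A_m\neq0$ (needed so that $a_m\neq0$ and the separation is genuine) yields a sequence satisfying the hypotheses of Theorem~\ref{thm:UD}. Consequently $(a_m x)$ is uniformly distributed modulo $1$ for $\mu$-almost all $x$, whence $\left\Vert a_m x\right\Vert\not\to0$ almost everywhere, and the solution set is $\mu$-null. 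Taking the union over the countably many coefficient vectors completes the proof.

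I expect the main obstacle to be precisely the verification of the hypotheses of Theorem~\ref{thm:UD}, namely that $A_m\neq0$ for infinitely many $m$, so that $a_m\neq0$ and the gaps stay bounded below. This amounts to showing that the combined series $\sum_n\big(\sum_j c_j n^{-j}+d(-1)^n[\log_2 n]/n\big)$ does not have an identically vanishing tail, which rests on the fact that the alternating term carrying $[\log_2 n]$ cannot be cancelled by the smooth power terms $n^{-j}$: on a dyadic block $[\log_2 n]$ is constant, the sign $(-1)^n$ isolates the $d$-contribution, and forcing the general term to vanish on such a block drives $d=0$ and then, by a vanishing-polynomial argument in $n$, $c_2=\dots=c_K=0$, contradicting the choice of coefficients.
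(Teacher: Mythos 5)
Your proposal is correct and follows essentially the same route as the paper: multiply the assumed relation by $(m!)^K$, strip the integer parts via $[y]=y-\{y\}$ so that the tail becomes $a_mx$ plus a uniformly vanishing fractional-part term, deduce $\Vert a_mx\Vert\to 0$, and contradict the uniform distribution of $(a_mx)$ supplied by Theorem~\ref{thm:UD}. If anything you are more explicit than the paper on the two points it elides --- the countable union over coefficient vectors and the non-degeneracy of the coefficient sequence $(A_m)$ needed for the gap hypothesis --- though to fully close the latter you should upgrade ``$A_m\neq 0$ infinitely often'' to a quantitative lower bound (e.g.\ $|A_m|\gg m^{1-K}$ when $d=0$, and $\max(|A_m|,|A_{m+1}|)\gg \log m/m$ when $d\neq 0$, both of which follow from your dyadic-block observation) so that $|a_m|=(m!)^K|A_m|$ genuinely grows along the chosen subsequence.
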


\begin{proof}
Let $x$ be a real number. Then, there are $A_0,\dots A_K\in \mathbb Z$ not all equal to $0$ and such that 
\begin{multline} \label{HanKris1.1}
\sum_{j=2}^KA_j\sum_{n=1}^\infty\frac{[((n-1)!)^Kn^{K-j} x]}{(n!)^K}\\ +A_1\sum_{n=1}^\infty  (-1)^n \frac{[((n-1)!)^Kn^{K-1} x[\log_2 n]]}{(n!)^K}+A_0=0.
\end{multline}

Let $N\in\mathbb Z^+$. Multiplying (\ref{HanKris1.1}) by $(N!)^K$, we obtain that
\begin{multline*}
\sum_{j=2}^KA_j\sum_{n=N+1}^\infty\frac{[((n-1)!)^Kn^{K-j} x]}{((N+1)\dots n)^K}\\
+A_1\sum_{n=N+1}^\infty  (-1)^n \frac{[((n-1)!)^Kn^{K-1} x[\log_2 n]]}{((N+1)\dots n)^K}+B=0,
\end{multline*}
where $B$ is a suitable integer constant which depends on $N$.

The sequences in this expression converge, and both
$$
\sum_{j=2}^KA_j\sum_{n=N^{N(K+1)}+1}^\infty\frac{[((n-1)!)^Kn^{K-j} x]}{((N+1)\dots n)^K} = O\left(\frac{1}{N}\right)
$$ 
and 
$$
A_1\sum_{n=N^{N(K+1)}+1}^\infty  (-1)^n \frac{[((n-1)!)^Kn^{K-1} x[\log_2 n]]}{((N+1)\dots n)^K}= O\left(\frac{1}{N}\right)
$$
We remove these tails at the cost of introducing an error term of order $O(\frac{1}{N})$.

Now, note that $[x] = x - \{x\} = x + O(1)$ and apply this to remove the integer part of the sequence of numerators at the cost of a very small error, which is absorbed in the $O(\frac{1}{N})$. The upshot is that
\begin{equation}
\begin{split}
\sum_{j=2}^KA_j\sum_{n=N+1}^{N^{N(K+1)}}\frac{((n-1)!)^Kn^{K-j} x}{((N+1)\dots n)^K}&+A_1\sum_{n=N+1}^{N^{N(K+1)}}  (-1)^n \frac{((n-1)!)^Kn^{K-1} x[\log_2 n]}{((N+1)\dots n)^K}\\
&+
\label{eq:limit_zero}
B+O\left(\frac 1N\right)=0. 
\end{split}
\end{equation}
As $N$ tends to infinity, the error term vanishes, and as $B$ is an integer, the fractional part of the first expression must converge to $0$.
 
However,  we will see that by Theorem \ref{thm:UD}, the sequence
\begin{alignat*}{2}
(f_x(N)) =&
\Bigg(\sum_{j=2}^KA_j\sum_{n=N+1}^{N^{N(K+1)}}\frac{((n-1)!)^Kn^{K-j} x}{((N+1)\dots n)^K}\\
&+A_1\sum_{n=N+1}^{N^{N(K+1)}}  (-1)^n \frac{((n-1)!)^Kn^{K-1} x[\log_2 n]}{((N+1)\dots n)^K}\Bigg)
\end{alignat*}
is uniformly distributed modulo $1$ for $\mu$-almost all $x$. This will complete the proof, as it immediately implies that the expression in \eqref{eq:limit_zero} cannot tend to an integer.

To apply Theorem \ref{thm:UD}, we need to check that the sequence of integers considered satisfies the appropriate conditions. Namely, we need to check that the sequence $(a_N)$ given by 
\begin{multline*}
a_N =\sum_{j=2}^KA_j\sum_{n=N+1}^{N^{N(K+1)}}\frac{((n-1)!)^Kn^{K-j} }{((N+1)\dots n)^K}\\
+A_1\sum_{n=N+1}^{N^{N(K+1)}}  (-1)^n \frac{((n-1)!)^Kn^{K-1} [\log_2 n]}{((N+1)\dots n)^K}
\end{multline*}
is a discrete subset of the reals. This is however simple. Each of the interior sums in the first term is equal to 
\begin{equation}
\label{eq:zetaUD}
(N!)^K \sum_{n=N+1}^{N^{N(K+1)}}\frac{1}{n^j},
\end{equation}
which grows at least as fast as $N!$ to any power slightly smaller than $k_j$, and so the asymptotic growth of the sequence $a_N$ is governed by the largest value of $k_j$, for which it grows in absolute value like a power of $N!$. Clearly, this sequence has the desired property. 

If $K=1$, and only the perturbed Euler--Mascheroni constant is considered, we obtain similarly a very rapid growth in the $a_N$.
\end{proof}

For the first perturbations of the series expansion of $\zeta$, we have the following almost sure irrationality statement.

\begin{Theorem}
Let $\mu$ be a Radon measure on $\mathbb{R}$ with positive Fourier exponent and let $\alpha \ge 2$ be an integer.
For $\mu$-almost all numbers $x$ the sum $\sum_{n=1}^\infty \frac{[((n-1)!)^\alpha x]}{(n!)^\alpha}$ as well as the sum $\sum_{n=1}^\infty (-1)^n \frac{[(n-1)! x[\log_2 n]]}{n!}$ are irrational numbers.
\end{Theorem}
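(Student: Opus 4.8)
The plan is to treat the two series separately and to run, for each, the same mechanism as in the proof of Theorem~\ref{HanKris1.t1}, but specialised to the hypothesis of rationality of a single series rather than to a general linear relation. In both cases the point is that rationality of the sum would force a certain sequence $\Vert a_N x\Vert$ to tend to $0$, whereas Theorem~\ref{thm:UD} guarantees that $(a_N x)$ is uniformly distributed modulo $1$ for $\mu$-almost all $x$, so that $\Vert a_N x\Vert \not\to 0$; intersecting the two resulting co-null sets gives the statement.

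First I would treat $S(x) = \sum_{n=1}^\infty \frac{[((n-1)!)^\alpha x]}{(n!)^\alpha}$. Suppose, for a fixed $x$, that $S(x) = p/q \in \Q$. For every $N \ge q$ we have $q \mid (N!)^\alpha$, hence $(N!)^\alpha S(x) \in \Z$ and $\Vert (N!)^\alpha S(x)\Vert = 0$. On the other hand, multiplying the series by $(N!)^\alpha$ and using that $(N!)^\alpha/(n!)^\alpha$ is an integer for $n \le N$ while it equals $((N+1)\cdots n)^{-\alpha}$ for $n > N$, the terms with $n \le N$ contribute an integer, so $(N!)^\alpha S(x) \equiv \sum_{n=N+1}^\infty \frac{[((n-1)!)^\alpha x]}{((N+1)\cdots n)^\alpha} \pmod 1$. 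Truncating the tail at $M = N^{N(\alpha+1)}$ and replacing $[((n-1)!)^\alpha x]$ by $((n-1)!)^\alpha x$ costs only $O(1/N)$: the removed tail is bounded by $x (N!)^\alpha \sum_{n>M} n^{-\alpha}$, negligible for this $M$ when $\alpha \ge 2$, and the fractional-part error is $O(N^{-\alpha})$. Since $((n-1)!)^\alpha/((N+1)\cdots n)^\alpha = (N!)^\alpha/n^\alpha$, this yields $(N!)^\alpha S(x) \equiv a_N x + O(1/N) \pmod 1$ with $a_N = (N!)^\alpha \sum_{n=N+1}^{M} n^{-\alpha}$, so rationality of $S(x)$ would force $\Vert a_N x \Vert \to 0$.

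Now $a_N \sim (N!)^\alpha/((\alpha-1)N^{\alpha-1})$ grows super-exponentially, so $\vert a_N\vert \to \infty$ and the range of $(a_N)$ has no finite accumulation point; as noted in the remark following Theorem~\ref{thm:UD}, it is enough that the sequence take values in such a discrete set. Theorem~\ref{thm:UD} then shows $(a_N x)$ is uniformly distributed modulo $1$ for $\mu$-almost all $x$, whence $\Vert a_N x\Vert \not\to 0$ for such $x$, contradicting the previous paragraph. Thus the set of $x$ with $S(x) \in \Q$ is $\mu$-null.

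For $T(x) = \sum_{n=1}^\infty (-1)^n \frac{[(n-1)! x[\log_2 n]]}{n!}$ I would repeat the argument with $N!$ in place of $(N!)^\alpha$: rationality forces $\Vert N!\,T(x)\Vert = 0$ for all large $N$, and the same truncation and linearisation reduce this, modulo $1$ and up to $O(1/N)$, to $\Vert b_N x\Vert$ with $b_N = N! \sum_{n=N+1}^{M} (-1)^n [\log_2 n]/n$. The main obstacle, and the only genuine difference from the first series, is the tail control: here the underlying series is only conditionally convergent and $[\log_2 n]$ jumps, so the estimate of the removed tail must be carried out through the alternating, Vacca-type structure rather than by absolute bounds. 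Once this is established, the dominant term $n = N+1$ gives $b_N \sim (-1)^{N+1}(N-1)!\log_2 N$, which again grows super-exponentially, so $(b_N)$ has discrete range and Theorem~\ref{thm:UD} applies verbatim, completing the proof.
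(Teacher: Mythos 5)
Your argument is essentially the paper's own proof: assume rationality, clear denominators by multiplying by $(N!)^\alpha$, truncate the tail, strip the integer parts at a cost of $O(1/N)$, and contradict the uniform distribution of $(a_Nx)$ supplied by Theorem~\ref{thm:UD}; the only differences are cosmetic (your truncation point $N^{N(\alpha+1)}$ versus the paper's $(N!)^3$, and your neat observation that taking $N\ge q$ removes the dependence on the denominator $q$). For the second series the paper likewise only remarks that ``the same method and truncation applies,'' so your explicit flagging of the conditionally convergent, Vacca-type tail is if anything more candid than the original.
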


\begin{proof}
Fix one of the series, $\sum_{n=1}^\infty \frac{[((n-1)!)^\alpha x]}{(n!)^\alpha}$ say. Suppose to the contrary that the series is rational and pick $p,q \in \mathbb{N}$ such that
\begin{equation}
\label{eq:zeta_1}
q \sum_{n=1}^\infty \frac{[((n-1)!)^\alpha x]}{(n!)^\alpha} = p.
\end{equation}
Let $N \in \mathbb{N}$ and multiply \eqref{eq:zeta_1} by $(N!)^\alpha$. We then find that
$$
q \sum_{n=N!+1}^\infty \frac{[((n-1)!)^\alpha x]}{(n!)^\alpha} = B
$$
for some $B \in \mathbb{Z}$.

We truncate the series at $(N!)^3$. Estimating the remainder by an integral, we easily find that
\begin{multline*}
q  \sum_{n=(N!)^3+1}^\infty \frac{[((n-1)!)^\alpha x]}{(n!)^\alpha} \le qx (N!)^\alpha \sum_{n=(N!)^3+1}^\infty \frac{1}{N^\alpha} \\
 \ll qx (N!)^{\alpha - 3 \alpha + 3} = O\left(\frac1N\right).
\end{multline*}

We now apply the property that $[x] = x - \{x\} = x + O(1)$ to remove the integer part in what remains, noting that
$$
\sum_{n=N!+1}^{(N!)^3} \frac{1}{(n!)^\alpha} = O\left(\frac1N\right).
$$

The upshot is that
\begin{equation}
\label{eq:zeta_2}
\left\Vert x q \sum_{n=N!+1}^{(N!)^3} \frac{((n-1)!)^\alpha}{(n!)^\alpha} \right\Vert = O\left(\frac1N\right).
\end{equation}
But clearly the sequence 
$$
\left(q \sum_{n=N!+1}^{(N!)^3} \frac{((n-1)!)^\alpha}{(n!)^\alpha}\right)
$$
satisfies the assumptions of Theorem \ref{thm:UD}, so that the interior of \eqref{eq:zeta_2} is uniformly distributed modulo $1$ for $\mu$-almost all $x$. In particular, it is close to $\frac{1}{2}$ infinitely often, which is a contradiction.

For the perturbed Euler--Mascheroni constant, the same method and truncation applies.
\end{proof}

Note that we cannot prove the almost sure linear independence of these series by the present method. The reason is simple: As we remove the square brackets to pass from integer part in the numerator to the different series to which Theorem \ref{thm:UD} is applicable, since the exponents in the denominators in the last result are all different, we would get an error which is too large to be useful at all. 

We now prove almost sure irrationality of the last two perturbed series.

\begin{Theorem} \label{HanKris1.t2}
Let $\mu$ be a Radon measure on $\mathbb{R}$ with positive Fourier dimension.
For $\mu$-almost all numbers $x$ the number $\sum_{n=1}^\infty \frac{[\prod_{j=1}^{n-1}j^j x]}{\prod_{j=1}^nj^j }$ is irrational.
\end{Theorem}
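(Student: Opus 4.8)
The plan is to mimic the irrationality argument already used for the perturbed $\zeta$-values. Write $P_n = \prod_{j=1}^{n} j^j$ (with $P_0 = 1$) and record the telescoping identity
\[
\frac{P_{n-1}}{P_n} = \frac{\prod_{j=1}^{n-1} j^j}{\prod_{j=1}^{n} j^j} = \frac{1}{n^n},
\]
which shows that the unperturbed series ($x=1$) is exactly Sophomore's Dream $\sum_{n\ge1} n^{-n}$. It suffices to prove, for each fixed pair $p,q\in\mathbb{N}$, that the set $E_{p,q}=\{x:\ q\sum_{n=1}^\infty \frac{[P_{n-1}x]}{P_n}=p\}$ is $\mu$-null; a countable union over $(p,q)$ then yields irrationality for $\mu$-almost all $x$. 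So I would fix $p,q$ and $x\in E_{p,q}$, fix a large $N$, and multiply the relation $q\sum_{n=1}^\infty \frac{[P_{n-1}x]}{P_n}=p$ by $P_N$. Since $P_n \mid P_N$ for every $n\le N$, the terms with $n\le N$ all contribute integers, leaving
\[
q\sum_{n=N+1}^\infty \frac{P_N\,[P_{n-1}x]}{P_n} = B
\]
for some $B=B(N)\in\mathbb{Z}$.

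Next I would truncate the tail at a suitable upper index $M=M(N)$ and then drop the integer parts. Because $P_N$ grows super-exponentially, with $\log P_N \sim \tfrac12 N^2\log N$, while $\sum_{n>M} n^{-n}$ decays like $(M+1)^{-(M+1)}$, a modest choice such as $M=N^2$ already makes the discarded tail $qxP_N\sum_{n>M} n^{-n}$ of size $O(1/N)$. Writing $[P_{n-1}x]=P_{n-1}x+O(1)$ introduces a further error bounded by $qP_N\sum_{n=N+1}^M P_n^{-1} = O\!\big(qP_N/P_{N+1}\big)=O\!\big(q/(N+1)^{N+1}\big)=O(1/N)$, the series $\sum P_n^{-1}$ being dominated by its first term since the $P_n$ increase super-geometrically. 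Invoking the telescoping identity once more, the surviving main term is $qxP_N\sum_{n=N+1}^{M} n^{-n}$, so altogether
\[
\left\Vert x\, q\, P_N \sum_{n=N+1}^{M} \frac{1}{n^n}\right\Vert = O\!\left(\frac1N\right).
\]

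Finally I would apply Theorem \ref{thm:UD} to the real sequence $a_N = qP_N\sum_{n=N+1}^{M(N)} n^{-n}$, whose leading term is $qP_N/(N+1)^{N+1}$, with logarithm asymptotic to $\tfrac12 N^2\log N$; thus $a_N$ increases rapidly, the $a_N$ lie in a discrete set, and the gap hypothesis $|a_k-a_j|\ge c$ is met. Theorem \ref{thm:UD} then gives that $(a_N x)$ is uniformly distributed modulo $1$ for $\mu$-almost all $x$, so $\Vert a_N x\Vert$ lands near $\tfrac12$ for infinitely many $N$, contradicting the displayed bound $O(1/N)\to 0$. Hence $E_{p,q}$ is $\mu$-null. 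The step I would be most careful about — and the only genuinely technical one — is the bookkeeping of the super-exponential growth of $P_N$: I must check both that the truncation error and the integer-part error are honestly $O(1/N)$, and that the resulting coefficient sequence $(a_N)$ is separated enough to satisfy the hypothesis of Theorem \ref{thm:UD}. Everything else runs exactly parallel to the perturbed $\zeta$-series argument.
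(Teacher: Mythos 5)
Your proposal is correct and follows essentially the same route as the paper's proof: multiply by $\prod_{j=1}^N j^j$, truncate at $N^2$, strip the integer parts at a cost of $O(1/N)$, and apply Theorem \ref{thm:UD} to the rapidly growing coefficient sequence $a_N = qP_N\sum_{n=N+1}^{N^2} n^{-n}$ to contradict the fractional parts tending to $0$. Your explicit reduction to the nullity of each set $E_{p,q}$ followed by a countable union over $(p,q)$ is a slightly more careful handling of the quantifiers than the paper's write-up, but it is the same argument.
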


\begin{proof}
Suppose the contrary that the series is rational. Let $x$ be a real number. Let $p,q\in\mathbb Z^+$ such that 

\begin{equation} \label{HanKris1.2}
q\sum_{n=1}^\infty \frac{[\prod_{j=1}^{n-1}j^j x]}{\prod_{j=1}^nj^j }=p.
\end{equation}

Let $N\in\mathbb Z^+$ and multiply \eqref{HanKris1.2} by $\prod_{j=1}^Nj^j$ to obtain 
$$
q\sum_{n=N+1}^\infty \frac{[\prod_{j=1}^{n-1}j^j x]}{\prod_{j=N+1}^nj^j }=B.
$$
where $B$ is a suitable integer constant which depends on $N$. We now truncate at $N^2+1$ and observe that
$$
q\sum_{n=N+1}^\infty \frac{\{\prod_{j=1}^{n-1}j^j x\}}{\prod_{j=N+1}^nj^j }  = O\left( \frac 1N\right), \quad  q\sum_{n=N^2+1}^\infty \frac{[\prod_{j=1}^{n-1}j^j x]}{\prod_{j=N+1}^nj^j } = O\left( \frac 1N\right).
$$
As in the preceding proof, we remove the integer part from the remaining term and find that,
\begin{equation}
\label{eq:UD_contradiction}
\left\{  q\sum_{n=N+1}^{n=N^2} \frac{\prod_{j=1}^{n-1}j^j x}{\prod_{j=N+1}^nj^j } \right\} =O(\frac 1N). 
\end{equation}

Now, the sequence
$$
 (a_N) =  \left(q\sum_{n=N+1}^{n=N^2} \frac{\prod_{j=1}^{n-1}j^j }{\prod_{j=N+1}^nj^j }\right)
 $$ 
is an increasing sequence of rationals taking values in a discrete set, so by Theorem \ref{thm:UD}, the sequence $\{a_Nx\}$ is uniformly distributed modulo $1$. This is in contradiction with \eqref{eq:UD_contradiction}.
\end{proof}

We finish this section with the perturbed sum for $\sum \frac{1}{n!+1}$.

\begin{Theorem} \label{HanKris1.t3}
Let $\mu$ be a Radon measure on $\mathbb{R}$ with positive Fourier dimension.
Then for almost all numbers $x$ the number $\sum_{n=1}^\infty \frac{[\prod_{j=1}^{n-1}(j!+1) x]}{\prod_{j=1}^n(j!+1) }$ is irrational.
\end{Theorem}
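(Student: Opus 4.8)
The plan is to mimic the proof of Theorem~\ref{HanKris1.t2} almost verbatim, since the series $\sum_{n=1}^\infty \frac{[\prod_{j=1}^{n-1}(j!+1)x]}{\prod_{j=1}^n(j!+1)}$ has exactly the same multiplicative architecture as the one treated there. Abbreviate $P_n = \prod_{j=1}^n(j!+1)$, so that the series reads $\sum_{n=1}^\infty \frac{[P_{n-1}x]}{P_n}$. For a fixed $q \in \Z^+$ I will attach to it the sequence $a_N = a_N(q) = q P_N \sum_{n=N+1}^{N^2} \frac{1}{n!+1}$; once I check the hypotheses of Theorem~\ref{thm:UD} for this sequence, that theorem supplies a $\mu$-null set $E_q$ off which $(a_N x)$ is uniformly distributed modulo $1$. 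Since there are only countably many $q$, the union $E = \bigcup_{q} E_q$ is still $\mu$-null, and it suffices to prove that for every $x \notin E$ the series is irrational.

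So fix $x \notin E$ and suppose for contradiction that the series equals $p/q$ with $p,q \in \Z^+$. Multiplying $q\sum_{n=1}^\infty \frac{[P_{n-1}x]}{P_n}=p$ by $P_N$ and using that $P_n \mid P_N$ and $[P_{n-1}x]\in\Z$ for $n\le N$, the low-order terms collapse to an integer and leave $q\sum_{n=N+1}^\infty \frac{[P_{n-1}x]}{\prod_{j=N+1}^n(j!+1)}=B$ for some $B=B(N)\in\Z$. The computation is then driven by the telescoping identity $\frac{P_{n-1}}{\prod_{j=N+1}^n(j!+1)}=\frac{P_N}{n!+1}$, valid for all $n\ge N+1$. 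Replacing $[P_{n-1}x]$ by $P_{n-1}x-\{P_{n-1}x\}$, the fractional-part contribution is bounded by $q\sum_{n\ge N+1}\frac{1}{\prod_{j=N+1}^n(j!+1)}$, which is controlled by its first term $\frac{q}{(N+1)!+1}=O(1/N)$; and truncating the main term $qxP_N\sum_{n\ge N+1}\frac{1}{n!+1}$ at $n=N^2$ costs only $O(1/N)$, since the crude bound $P_N\le (2\cdot N!)^N$ is negligible against $(N^2+1)!$. Collecting the errors yields $\{a_N x\}=O(1/N)$, so $\{a_N x\}\to 0$.

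It then remains to verify that $(a_N)$ is admissible for Theorem~\ref{thm:UD}, i.e. that $|a_k-a_j|\ge c$ for some $c>0$ and all $k\ne j$; this is the only step that genuinely differs from the previous proof, and is where I expect the real (though modest) work to lie. A one-line estimate gives $a_N\sim qP_N/(N+1)!$, whence $a_{N+1}/a_N\sim (N+1)!/(N+2)\to\infty$, so $(a_N)$ is an increasing sequence of rationals with consecutive gaps tending to infinity; in particular the gaps are bounded below, which is the required separation. Theorem~\ref{thm:UD} then makes $(a_N x)$ uniformly distributed modulo $1$ for $x\notin E$, so $\{a_N x\}$ is dense in $[0,1)$ and returns near $\tfrac12$ infinitely often, contradicting $\{a_N x\}\to 0$. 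Because the factorial growth here outpaces the $\prod_j j^j$ growth of Theorem~\ref{HanKris1.t2}, every truncation and separation estimate is if anything more comfortable than before, so I anticipate no serious obstacle beyond correctly confirming the super-exponential growth that guarantees the lower bound on the gaps.
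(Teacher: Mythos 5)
Your proposal is correct and follows essentially the same route as the paper's own proof: multiply by $\prod_{j=1}^N(j!+1)$, discard the fractional-part contribution and the tail beyond $n=N^2$ at a cost of $O(1/N)$, and contradict the uniform distribution of $(a_Nx)$ supplied by Theorem~\ref{thm:UD}. Your additional care in taking a countable union of null sets over $q$ and in verifying the gap condition via $a_{N+1}/a_N\to\infty$ only makes explicit details the paper leaves implicit.
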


\begin{proof}
Suppose the contrary. Let $x$ be a real number. Let $p,q\in\mathbb Z^+$ such that 
\begin{equation} \label{HanKris1.3}
q\sum_{n=1}^\infty \frac{[\prod_{j=1}^{n-1}(j!+1) x]}{\prod_{j=1}^n(j!+1) }=p.
\end{equation}
  
Let $N\in\mathbb Z^+$ and multilpy \eqref{HanKris1.3} by $\prod_{j=1}^N(j!+1)$ to obtain
$$
q\sum_{n=N+1}^\infty \frac{[\prod_{j=1}^{n-1}(j!+1) x]}{\prod_{j=N+1}^n(j!+1) }=B.
$$
where $B$ is a suitable integer constant which depends on $N$. We truncate again at $n=N^2+1$ and note that 
$$
q\sum_{n=N+1}^\infty \frac{\{\prod_{j=1}^{n-1}(j!+1) x\}}{\prod_{j=N+1}^n(j!+1) }= O\left(\frac 1N \right)$$
and
$$ 
\quad q\sum_{n=N^2+1}^\infty \frac{[\prod_{j=1}^{n-1}(j!+1) x]}{\prod_{j=N+1}^n(j!+1) }= O\left(\frac 1N \right).
$$
As before, this implies that
\begin{equation}
\label{eq:UD_contradiction2}
\left\{ q\sum_{n=N+1}^\infty \frac{\{\prod_{j=1}^{n-1}(j!+1) x\}}{\prod_{j=N+1}^n(j!+1) }\right\} = O(\frac 1N). 
\end{equation}

To obtain a contradiction, we need only note that 
$$
(a_N) = \left(q\sum_{n=N+1}^{n=N^2+1} \frac{\prod_{j=1}^{n-1}(j!+1) }{\prod_{j=N+1}^n(j!+1) }\right)
$$
is an increasing sequence of rationals taking values in a discrete set and apply Theorem \ref{thm:UD}.
\end{proof}

\section{Criteria for irrationality}

In the above proofs, we have used much stronger results than actually needed. In fact, the uniform distribution of the sequences in question is unnecessarily strong, and we only need for the sequences of fractional parts in the proofs to have an accumulation point which is not an integer. 

Inserting $x=1$ in the various proofs recovers the original series, and in this way, we obtain some seemingly new criteria for the irrationality of the various series. This is where the explicit value of the truncation point is needed. We state these as corollaries.

\begin{Corollary}
Let $k \ge 2$ be an integer. If the sequence
$$
\left(\left\{\sum_{n=N+1}^{(N!)^{(2k-1)/(k-1)}} \frac{((n-1)!)^{k}}{((N+1)\dots n)^{k}}\right\}\right)
$$
has an irrational accumulation point or infinitely many accumulation points then $\zeta(k)$ is irrational.
\end{Corollary}

It is tempting to conduct numerical experiments on the distribution of this sequence for some value of $k$. With the help of Alex Ghitza, we have run some numerical experiments on $\zeta(5)$ using \texttt{Sage}. It does not appear that the sequence arising from $k=5$ accumulates at the endpoints of the unit interval. This is however not surprising, as is seen from \eqref{eq:zetaUD}. Indeed, from a numerical point of view, the sum $\sum_{n=N+1}^{(N!)^{9/4}}\frac{1}{n^5}$ is virtually indistinguishable from the sum $\sum_{n=N+1}^{\infty}\frac{1}{n^5}$. On multiplying by $(N!)^5$ and adding the integer $(N!)^5\sum_{n=1}^{(N)!}\frac{1}{n^5}$, which makes no difference as we are considering the sequence modulo $1$, numerically we are in fact just seeing the fractional parts of the sequence $(N!)^5 \zeta(5)$, for which the criterion is clear: if $\zeta(5)$ is rational, this sequence would be an integer for $N$ larger than the denominator of $\zeta(5)$.

We state the corresponding results for the Euler--Mascheroni constant and the remaining two series.

\begin{Corollary}
If the sequence
$$
\left(\left\{\sum_{n=N+1}^{(N!)^3} (-1)^n \frac{(n-1)! [\log_2 n]}{(N+1)\dots n}\right\}\right)
$$
has an irrational accumulation point of infinitely many accumulation points then the Euler--Mascheroni constant $\gamma$ is irrational.
\end{Corollary}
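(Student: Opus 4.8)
The plan is to argue by contraposition, reusing the truncation device from the proof of the perturbed Euler--Mascheroni theorem but specialised to $x=1$. At $x=1$ the quantity $(n-1)![\log_2 n]$ is already an integer, so the square brackets disappear and the factorial series \eqref{modified_euler} reduces to Vacca's formula $\gamma = \sum_{n=1}^\infty (-1)^n \frac{[\log_2 n]}{n}$. Arguing by contraposition, I would assume that $\gamma$ is rational and fix $p,q \in \mathbb{Z}^+$ with $q\gamma = p$.

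First I would multiply $q\gamma = p$ by $N!$. Since $N!/n \in \mathbb{Z}$ for every $n \le N$, the terms with $n \le N$ together with $pN!$ collect into a single integer $B_N$, leaving
$$
qN! \sum_{n=N+1}^\infty (-1)^n \frac{[\log_2 n]}{n} = B_N \in \mathbb{Z}.
$$
Using $(N+1)\cdots n = n!/N!$, the partial sum $S_N := N! \sum_{n=N+1}^{(N!)^3} (-1)^n \frac{[\log_2 n]}{n}$ is exactly the expression inside the braces in the corollary, so that $qS_N = B_N - qN!\sum_{n=(N!)^3+1}^\infty (-1)^n \frac{[\log_2 n]}{n}$.

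Next I would bound the remaining tail, claiming that $qN!\sum_{n=(N!)^3+1}^\infty (-1)^n \frac{[\log_2 n]}{n} = O(1/N)$. The alternating sign is indispensable here, since the corresponding unsigned tail diverges; the natural tool is Abel summation, using that the partial sums of $((-1)^n)$ are bounded by $1$ while $\frac{[\log_2 n]}{n} \to 0$ with summable total variation on $[(N!)^3,\infty)$: it decreases like $\frac{\log n}{n}$ between consecutive powers of $2$ and jumps upward by $O(2^{-k})$ at each $n = 2^k$. This yields a tail of order $\frac{\log((N!)^3)}{(N!)^3}$, so after multiplying by $qN!$ we obtain $O\!\left(\frac{qN\log N}{(N!)^2}\right)$, which is easily $O(1/N)$.

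Combining these, $qS_N = B_N + O(1/N)$, whence $S_N = B_N/q + O(1/N)$. Since $B_N/q$ is a rational number with denominator dividing $q$, its fractional part lies in the finite set $\{0, 1/q, \dots, (q-1)/q\}$, and hence every accumulation point of the sequence $(\{S_N\})$ must lie in this set: there are at most $q$ of them and all are rational. This contradicts the assumption that $(\{S_N\})$ has an irrational accumulation point or infinitely many accumulation points, forcing $\gamma$ to be irrational. I expect the one genuinely delicate point to be the tail estimate: because $[\log_2 n]$ is only piecewise constant and jumps upward at the powers of $2$, Dirichlet's test for a monotone sequence does not apply directly, and a short bounded-variation argument (or a pairing of consecutive terms) is needed to control the alternating tail.
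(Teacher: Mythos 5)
Your proof is correct and follows essentially the same route the paper intends: specialising the metrical proof for the perturbed Euler--Mascheroni series to $x=1$ (where the integer parts vanish), using the same truncation at $(N!)^3$, and replacing the uniform-distribution step by the observation that rationality of $\gamma$ would force all accumulation points of the fractional parts into the finite rational set $\{0,1/q,\dots,(q-1)/q\}$ (to be pedantic, also possibly $1$, which changes nothing). Your explicit bounded-variation treatment of the alternating tail is a detail the paper glosses over but is handled correctly here.
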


From the numerical point of view, this sequence has the same defect as the preceding ones, and one would just end up with an experiment on the original Euler--Mascheroni constant.

The irrationality criteria for the Sophmore's Dream problem and the Erd\H{o}s problem are given in the following two corollaries.

\begin{Corollary}
If the sequence
$$
\left(\left\{  \sum_{n=N+1}^{N^2} \frac{\prod_{j=1}^{n-1}j^j }{\prod_{j=N+1}^nj^j } \right\}\right) = 
\left(\left\{  \sum_{n=N+1}^{N^2} \frac{\prod_{j=1}^Nj^j }{n^n } \right\}\right)
$$
has an irrational accumulation point or infinitely many accumulation points, then the series $\sum_{N=1}^\infty \frac1{n^n}$ is irrational.
\end{Corollary}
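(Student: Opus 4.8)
The plan is to mirror the structure of the preceding theorems (Theorems \ref{HanKris1.t2} and \ref{HanKris1.t3}), which all follow an identical template: assume rationality, clear denominators by multiplying by a suitable product, truncate the tail, strip the integer parts at the cost of a controlled error, and finally invoke Theorem \ref{thm:UD} to produce a contradiction. Concretely, suppose for contradiction that $\sum_{n=1}^\infty \frac{1}{n^n}$ is rational, and fix $p,q \in \mathbb{Z}^+$ with $q \sum_{n=1}^\infty \frac{1}{n^n} = p$. The key observation that makes the corollary's explicit truncation meaningful is the identity already displayed in the statement, namely that after multiplying by $\prod_{j=1}^N j^j = (N!)^{\cdots}$ one has $\frac{\prod_{j=1}^{n-1} j^j}{\prod_{j=1}^n j^j} = \frac{1}{n^n}$, so that the perturbed series at $x=1$ genuinely collapses back to the original Sophmore's Dream series.

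First I would multiply the hypothetical equation by $\prod_{j=1}^N j^j$, which converts the first $N$ terms into an integer $B$ and leaves a tail $q \sum_{n=N+1}^\infty \frac{1}{\,n^n / \prod_{j=1}^N j^j\,}$ that must itself be an integer. I would then truncate this tail at $n = N^2$, justifying that $q \sum_{n=N^2+1}^\infty \frac{\prod_{j=1}^N j^j}{n^n} = O(1/N)$ since the denominators $n^n$ grow far faster than the fixed (in $n$) numerator $\prod_{j=1}^N j^j \le (N^N)^N = N^{N^2}$, while $n^n \ge (N^2)^{N^2}$ for $n > N^2$, giving a ratio decaying superexponentially. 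Following the template, the integer $B$ forces the fractional part of the remaining finite sum $q \sum_{n=N+1}^{N^2} \frac{\prod_{j=1}^N j^j}{n^n}$ to be $O(1/N)$, hence to accumulate only at integers. This is exactly the statement that the sequence inside the corollary cannot have an irrational accumulation point nor infinitely many accumulation points: if it did, the fractional parts could not all converge to $0$, and we would contradict the forced $O(1/N)$ bound. Reading the contrapositive yields precisely the corollary.

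The main subtlety, as the paper itself flags in the discussion following the first corollary, is not the analytic estimate but the numerical vacuousness of the criterion: the truncation at $N^2$ is so deep that the finite sum is indistinguishable from the full tail, so modulo $1$ one is effectively just examining the fractional parts of $\left(\prod_{j=1}^N j^j\right) \sum_{n=1}^\infty \frac{1}{n^n}$, which are integers precisely when the series is rational. Thus the only genuine step requiring care is verifying that the tail and the integer-part-stripping errors are both $O(1/N)$; the former follows from the superexponential decay just described, and the latter because $\{y\} = O(1)$ absorbs into the already-established $O(1/N)$ bound once one checks $q \sum_{n=N+1}^{N^2} \frac{\prod_{j=1}^N j^j}{n^n} \cdot \frac{1}{\prod_{j=1}^{n-1} j^j}$-type corrections vanish at the required rate. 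I expect no serious obstacle, as this is a direct specialisation of the already-proven Theorem \ref{HanKris1.t2} with $x=1$, with the uniform-distribution input replaced by the weaker accumulation-point hypothesis discussed at the start of this section.
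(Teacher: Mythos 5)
Your proposal is correct and follows essentially the same route as the paper, which obtains this corollary by running the proof of Theorem \ref{HanKris1.t2} with $x=1$ (multiply by $\prod_{j=1}^N j^j$, truncate at $N^2$, conclude the fractional parts are $O(1/N)$ and hence accumulate only at $0$ and $1$), replacing the uniform-distribution input by the weaker accumulation-point hypothesis. The only cosmetic difference is that at $x=1$ the numerators $\prod_{j=1}^{n-1}j^j$ are already integers, so the integer-part-stripping error you mention is actually vacuous here.
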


\begin{Corollary}
If the sequence
$$
\left(\left\{ \sum_{n=N+1}^{n=N!+1} \frac{\prod_{j=1}^{n-1}(j!+1) }{\prod_{j=N+1}^n(j!+1) }\right\}\right) =
\left(\left\{ \sum_{n=N+1}^{n=N!+1} \frac{\prod_{j=1}^N(j!+1) }{n!+1 }\right\}\right)
$$
has infinitely many accumulation points or an irrational accumulation point, then the series $\sum_{N=1}^\infty \frac1{n!+1}$ is irrational.
\end{Corollary}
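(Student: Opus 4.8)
The plan is to run the argument of Theorem \ref{HanKris1.t3} at the specific value $x=1$, replacing the appeal to Theorem \ref{thm:UD} by the weaker observation announced at the start of this section: we do not need full uniform distribution, only control on the accumulation points of the relevant sequence of fractional parts. Accordingly I argue by contraposition, \emph{assuming} that $\sum_{n=1}^\infty \frac{1}{n!+1}$ is rational and deducing that the displayed sequence then has only finitely many accumulation points, all of them rational, which is exactly the negation of the hypothesis.

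First I would fix $p,q\in\N$ with $q\sum_{n=1}^\infty \frac{1}{n!+1}=p$, and note that at $x=1$ the numerators $\prod_{j=1}^{n-1}(j!+1)$ are already integers. Hence the series coincides with $\sum_{n=1}^\infty \frac{\prod_{j=1}^{n-1}(j!+1)}{\prod_{j=1}^n(j!+1)}$ and, crucially, the fractional-part error term that had to be discarded in the proof of Theorem \ref{HanKris1.t3} now vanishes identically. Multiplying by $\prod_{j=1}^N(j!+1)$ and absorbing the integer contributions of the indices $n\le N$ into a single integer $B_N$, I obtain
\[
q\sum_{n=N+1}^\infty \frac{\prod_{j=1}^N(j!+1)}{n!+1}=B_N.
\]

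Next I would justify the truncation at $n=N!+1$, the point where the explicit value matters. Using $j!+1\le 2\,N!$ for $j\le N$ gives $\prod_{j=1}^N(j!+1)\le (2\,N!)^N$, while the tail $\sum_{n=N!+2}^\infty \frac{1}{n!+1}$ is dominated by its first term, of size $\approx\frac{1}{(N!+2)!}$. Since $(N!+2)!$ dwarfs $(2\,N!)^N$, the tail contributes $O(1/N)$. Writing $s_N=\sum_{n=N+1}^{N!+1}\frac{\prod_{j=1}^N(j!+1)}{n!+1}$ for the partial sum appearing in the statement, this yields $qs_N=B_N+O(1/N)$, and therefore $\left\Vert qs_N\right\Vert=O(1/N)\to 0$.

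The final, and really the only delicate, step is to convert $\left\Vert qs_N\right\Vert\to 0$ into a statement about the accumulation points of $(\{s_N\})$. Since $qs_N$ lies within $o(1)$ of an integer, $s_N$ lies within $o(1)$ of an element of the finite set $\frac1q\{0,1,\dots,q-1\}$; consequently every accumulation point of $(\{s_N\})$ belongs to this finite set of rationals. This contradicts the hypothesis that $(\{s_N\})$ has an irrational accumulation point or infinitely many accumulation points, and the irrationality of $\sum_{n=1}^\infty\frac{1}{n!+1}$ follows. I expect the bookkeeping with the factor $q$ to be the main conceptual point: it is precisely the unknown denominator $q$ that forces us to allow finitely many rational accumulation points rather than a single limit, and it is this that dictates the exact form of the hypothesis.
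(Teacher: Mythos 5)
Your proposal is correct and follows essentially the same route the paper intends: the corollaries are obtained by running the proof of Theorem \ref{HanKris1.t3} at $x=1$ (where the integer-part and fractional-part error terms vanish) and replacing the appeal to Theorem \ref{thm:UD} by the observation that only the accumulation points of the fractional parts matter. Your explicit justification of the truncation at $n=N!+1$ and your careful handling of the denominator $q$ --- which forces the conclusion ``finitely many rational accumulation points'' rather than a single limit --- are exactly the right bookkeeping, and arguably spelled out more fully than in the paper.
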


Numerically these are less unweildy than the series related to the $\zeta$-function. Nonetheless, the numbers involved grow extremely rapidly, and we have not been able to get any useful information from numerical experimentation.

As a final remark, one can also obtain a criterion for the linear independence of $\zeta$-values from the above, although this is slightly more convoluted. Concretely, we get the following.

\begin{Corollary}
Let $K$ be a positive integer. Suppose that for any choice of $A_1, \dots, A_K \in \mathbb{Z}$ not all equal to $0$, the sequence of fractional parts of
$$
\sum_{j=2}^KA_j\sum_{n=N+1}^{N^{N(K+1)}}\frac{((n-1)!)^Kn^{K-j} }{((N+1)\dots n)^K}\\
+A_1\sum_{n=N+1}^{N^{N(K+1)}}  (-1)^n \frac{((n-1)!)^Kn^{K-1} [\log_2 n]}{((N+1)\dots n)^K}
$$
has an accumulation point different from $0$ and $1$.
Then, the set $$\{\gamma, \zeta(2), \zeta(3), \dots, \zeta(K)\}$$ consists of linearly independent numbers over rational numbers.
\end{Corollary}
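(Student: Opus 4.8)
The plan is to mirror the structure of the proof of Theorem \ref{HanKris1.t1}, but to run it in the contrapositive direction and to replace the appeal to Theorem \ref{thm:UD} by the weaker hypothesis supplied in the statement. Concretely, I would argue by contraposition: suppose the set $\{\gamma, \zeta(2),\dots,\zeta(K)\}$ is \emph{not} linearly independent over $\mathbb{Q}$. Then there exist integers $A_0,A_1,\dots,A_K$, not all zero, with
\begin{equation*}
\sum_{j=2}^K A_j \zeta(j) + A_1 \gamma + A_0 = 0.
\end{equation*}
Using $\zeta(j)=\sum_{n\ge1}[((n-1)!)^K n^{K-j}]/(n!)^K$ and the Vacca-type expansion for $\gamma$ (both at $x=1$, so the integer-part brackets around integer quantities are cosmetic), this relation takes exactly the form \eqref{HanKris1.1} specialised to $x=1$. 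The goal is then to show that the sequence of fractional parts in the statement accumulates only at $0$ or $1$, contradicting the hypothesis; this establishes the corollary.

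First I would carry out the multiplication by $(N!)^K$ and the two tail truncations at $n=N^{N(K+1)}$, exactly as in the proof of Theorem \ref{HanKris1.t1}, producing an integer $B=B(N)$ and an error of size $O(1/N)$. Since at $x=1$ the numerators $((n-1)!)^K n^{K-j}$ and $((n-1)!)^K n^{K-1}[\log_2 n]$ are already integers, removing the integer-part brackets is exact and introduces no error at all here, which is a simplification relative to the metrical case. The upshot is the identity
\begin{equation*}
\sum_{j=2}^K A_j \sum_{n=N+1}^{N^{N(K+1)}} \frac{((n-1)!)^K n^{K-j}}{((N+1)\dots n)^K} + A_1 \sum_{n=N+1}^{N^{N(K+1)}} (-1)^n \frac{((n-1)!)^K n^{K-1}[\log_2 n]}{((N+1)\dots n)^K} + B + O\Big(\tfrac1N\Big) = 0,
\end{equation*}
so that the displayed sum $S_N$ (the quantity whose fractional parts appear in the hypothesis) satisfies $\{S_N\}\to 0$ or $\{S_N\}\to 1$ as $N\to\infty$, because $S_N$ differs from the integer $-B$ by a term tending to $0$. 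Thus every accumulation point of $(\{S_N\})$ lies in $\{0,1\}$.

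The only subtlety, and the step I expect to require the most care, is the passage between linear dependence of the \emph{numbers} and existence of the specific integer relation feeding the sequence $S_N$. Genuine linear dependence over $\mathbb{Q}$ furnishes integers $A_0,\dots,A_K$ with $A_0$ possibly nonzero and the $A_1,\dots,A_K$ not all zero; one must check that the case where only $A_0\neq 0$ cannot occur (it would assert a nonzero integer equals zero), so that the tuple $(A_1,\dots,A_K)$ is genuinely nonzero and hence a legitimate input for the hypothesis. A second point to verify is that the truncation point $N^{N(K+1)}$ is large enough for both tail estimates to be $O(1/N)$ uniformly; this is inherited verbatim from the proof of Theorem \ref{HanKris1.t1} via the comparison of the inner sums with $(N!)^K\sum_{n>N} n^{-j}$, so no new estimate is needed. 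Assembling these observations yields the contradiction with the assumed accumulation behaviour and completes the proof.
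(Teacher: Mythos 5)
Your proposal is correct and is essentially the argument the paper intends: you specialise the proof of Theorem \ref{HanKris1.t1} to $x=1$ (where removing the integer-part brackets is exact) and replace the appeal to Theorem \ref{thm:UD} by the weaker accumulation-point hypothesis, exactly as the paper's remarks at the start of the final section prescribe. Your extra care about the constant term $A_0$ is harmless, since $A_0(N!)^K$ is absorbed into the integer $B$ in any case.
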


\paragraph{Achnowledgements:} We thank Alex Ghitza for helping us with \texttt{Sage}.

Department of Mathematics, Faculty of Science, University of Ostrava, 30.~dubna~22, 701~03 Ostrava~1, Czech Republic.\\
e-mail: hancl@osu.cz,

Department of Mathematics, Aarhus University, Ny Munkegade 118, 8000 Aarhus, Denmark.\\
e-mail: sik@math.au.dk,  

\end{document}